\newtheorem{thm}{Theorem}[section]
\newtheorem{prop}[thm]{Proposition}
\newtheorem{cor}[thm]{Corollary}
\newtheorem{ex}[thm]{Example}
\begin{document}

\begin{center}
{\Large \textbf{A combinatorial interpretation of Mahonian numbers of type $B$}} \vspace*{0.5cm}
\end{center}

\vspace*{0.3cm}
\begin{center}
Hasan Arslan$^{a,1}$,\\
$^{a}${\small {\textit{Department of  Mathematics, Faculty of Science, Erciyes University, 38039, Kayseri, Turkey}}}\\
{\small {\textit{ $^{1}$hasanarslan@erciyes.edu.tr}}}
\end{center}

\begin{abstract}
In this paper, we first introduce the number of signed permutations with exactly $k$ inversions, which is denoted by $i_B(n,k)$ and called \textit{Mahonian numbers of type $B$}. Then we provide a recurrence relation for the Mahonian numbers $i_B(n,k)$. In addition, we give an explicit recursive description for the summation of inversions of all permutations in the hyperoctahedral group $B_n$, denoted by $\mathcal{B}_n$. Furthermore, we enumerate the total number of inversions in permutations in the hyperoctahedral group $B_n$ concretely with the help of an inversion statistic and a backward permutation concepts on $B_n$.  

%We investigate their some combinatorial properties. Furthermore, we give two constructive  proofs of the strong $q$-log-concavity of the $q$-Mahonian numbers of type $B$ in $k$ and $n$, respectively. In particular for $q = 1$, we obtain two constructive proofs of the log-concavity of the Mahonian numbers of type $B$ in $k$ and $n$, respectively.

\end{abstract}

\textbf{Keywords}: Mahonian numbers, hyperoctahedral group, inversion number.\\

\textbf{2020 Mathematics Subject Classification}: 05A05, 05A15, 05A30.
\\
%*Corresponding Author: Hasan Arslan

\section{Introduction}
The main aim of this paper is to introduce Mahonian numbers of type $B$, which is defined to be the number of permutations having $k$ inversions and is denoted by $i_B(n,k)$. We will then investigate some combinatorial properties of these numbers by means of the method depending more on an inversion statistic defined on the hyperoctahedral group $B_n$. Of particular interest here are that $i_B(n,k)$ satisfies the recurrence for all $n\geq 2$
\begin{equation*}
i_B(n,k)=\sum_{i=max\{0,k-2n+1\}}^{min\{k,(n-1)^2\}}i_B(n-1,i)
\end{equation*}
and that the total number of inversions in all permutations in the group $B_n$ for all $n\geq 2$ is equal to $2^{n-1}n^2n!$.

Let $\mathbb{Z}$ and $\mathbb{N}$ be the sets of integers and non-negative integers, respectively. We will interested in three sets of integers, that is, $[n]:=\{1,2, \cdots, n\}$,~ $[m,n]:=\{m,m+1, \cdots, n\}$ and $\langle n \rangle:=\{-n,\cdots,-2,-1,1,2, \cdots, n\}$. The group of all the permutations of $[n]$ is the symmetric group $S_n$. Let $\beta=\beta_1 \cdots \beta_n$ be a one-line presentation of $\beta= \bigl(\begin{smallmatrix}
    1 & 2 &  \cdots  & n \\
    \beta_1 & \beta_2 & \cdots   &  \beta_n
  \end{smallmatrix}\bigr) \in S_n$ to mean that $ \beta_i= \beta(i)$ for all $i=1, \cdots, n$. As is well-known from \cite{Stanley2011}, the inversion number, the descent set and the major index of $\beta$ are respectively defined in the following way:
\begin{align*}
inv (\beta)=&| \{(i,j) \in [n] \times [n] ~:~ i<j~and~\beta_i > \beta _{j}\} | \\
Des (\beta)=&\{i \in [n-1] ~:~ \beta_i > \beta _{i+1}\} \\
maj (\beta)=& \sum_{i \in Des (\beta)}i.
\end{align*}
Let $I(n,k):=\{\beta \in S_n~:~inv(\beta)=k\}$ for any two integers $n\geq 1$ and $k \geq 0$ and let $ i(n,k)=|I(n,k)|$. More precisely, $i(n,k)$ is the number of permutations having $k$ inversions and is called as \textit{the classical Mahonian numbers}. Actually, $i(n,k)$ counts the number of different ways of distributing "$k$" balls among "$n-1$" boxes such that the $j$th box contains at most "$n-j$" balls. Denote by $\mathcal{A}_n$ the sum of inversions of all elements $\beta$ in $S_n$, that is, $\mathcal{A}_n=\sum_{\beta \in S_n}inv(\beta)$. It is well-known from \cite{Indong2008} that $\mathcal{A}_n=\frac{n!}{2}\binom{n}{2}$ for $n\geq1$ and it has a recurrence relation for all $n \geq 2$
$$\mathcal{A}_n=\frac{n!(n-1)}{2}+n\mathcal{A}_{n-1}$$
with the initial condition $\mathcal{A}_1=0$.

MacMahon proved in \cite{MacMahon1915} that the inversion statistic "inv" is equi-distributed with the major index "maj" over the symmetric group $S_n$, that is, 
\begin{equation}\label{MacMahon1}
\sum_{\beta \in S_n}q^{inv(\beta)}=\sum_{\beta \in S_n}q^{maj(\beta)}=\prod_{i=1}^n \frac{1-q^i}{1-q}
\end{equation}
where $q$ is an indeterminate.
Considering that the element with the maximum number of inversions in $S_n$ is $\beta= \bigl(\begin{smallmatrix}
    1 & 2 &  \cdots  & n \\
   n & n-1 & \cdots   & 1
\end{smallmatrix}\bigr)$ and $inv(\beta)=\binom{n}{2}$, the equation (\ref{MacMahon1}) can be expressed as
\begin{equation}\label{MacMahon2}
\sum_{k=0}^{\binom{n}{2}} i(n,k) q^k=(1+q)(1+q+q^2)\cdots(1+q+q^2+\cdots+q^{n-1}).
\end{equation}
Furthermore, $i(n,k)$ is equal to zero when $k<0$ or $k>\binom{n}{2}$. As a matter of fact, the classical Mahonian numbers coincide with the coefficients of the terms of the polynomial given in the equation (\ref{MacMahon2}). Due to \cite{Indong2008}, the classical Mahonian numbers have the symmetry property and longitudinal recurrence relation given in the following theorem.

\begin{thm}[Indong and Peralta, \cite{Indong2008}]\label{sym}
For all $n>0$ and $0\leq k \leq \binom{n}{2}$, we have
$$i(n,k)=i(n, \binom{n}{2}-k)$$
and
$$i(n,k)=\sum_{j=max\{0,k-n+1\}}^{min\{k,\binom{n-1}{2}\}}i(n-1,j).$$
\end{thm}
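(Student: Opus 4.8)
The plan is to read off both assertions from the MacMahon factorization in equation (\ref{MacMahon2}). Write $f_n(q):=\sum_{k=0}^{\binom{n}{2}} i(n,k)\,q^k=\prod_{i=1}^{n}(1+q+\cdots+q^{i-1})$ and treat the two statements separately, using the product structure of $f_n$ as the single driving fact.

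For the symmetry $i(n,k)=i(n,\binom{n}{2}-k)$, I would argue that $f_n$ is a palindromic (self-reciprocal) polynomial. Each factor $1+q+\cdots+q^{i-1}$ has degree $i-1$ and satisfies $q^{i-1}(1+q^{-1}+\cdots+q^{-(i-1)})=1+q+\cdots+q^{i-1}$, so it is palindromic; and a product of palindromic polynomials is palindromic with degree equal to the sum of the degrees, which here is $\sum_{i=1}^{n}(i-1)=\binom{n}{2}$. Hence $q^{\binom{n}{2}}f_n(1/q)=f_n(q)$, and comparing the coefficient of $q^k$ on the two sides gives $i(n,\binom{n}{2}-k)=i(n,k)$. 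Equivalently one can give a bijective proof: the complementation map $\beta\mapsto\bar\beta$ with $\bar\beta_i=n+1-\beta_i$ sends each inversion of $\beta$ to a non-inversion of $\bar\beta$ and vice versa, so $inv(\bar\beta)=\binom{n}{2}-inv(\beta)$, yielding an involution between $I(n,k)$ and $I(n,\binom{n}{2}-k)$.

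For the recurrence, I would exploit the one-step factorization $f_n(q)=f_{n-1}(q)\,(1+q+\cdots+q^{n-1})$, which is immediate from the product formula. Writing this as $\sum_k i(n,k)\,q^k=\big(\sum_j i(n-1,j)\,q^j\big)\big(\sum_{r=0}^{n-1}q^r\big)$ and extracting the coefficient of $q^k$ gives $i(n,k)=\sum_{r=0}^{n-1} i(n-1,k-r)$, i.e. the sum of $i(n-1,j)$ over all $j$ with $k-n+1\le j\le k$. A purely combinatorial version of the same step is available if preferred: if $\beta\in I(n,k)$ has the value $n$ in position $p$, then $n$ contributes exactly $n-p$ inversions, and deleting it leaves a permutation of $[n-1]$ with $k-(n-p)$ inversions; summing over $r=n-p\in\{0,\dots,n-1\}$ reproduces the identity.

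The only genuine care needed is in the summation limits, and I expect this to be the main (if modest) obstacle. The convention $i(n-1,j)=0$ for $j<0$ or $j>\binom{n-1}{2}$ forces the lower limit up to $\max\{0,k-n+1\}$ and the upper limit down to $\min\{k,\binom{n-1}{2}\}$: the index $r=k-j$ must lie in $\{0,\dots,n-1\}$, which gives $k-n+1\le j\le k$, while $j$ must label a nonzero Mahonian number, which gives $0\le j\le\binom{n-1}{2}$. Intersecting these two ranges yields precisely the stated bounds, completing the argument.
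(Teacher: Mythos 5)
Your proof is correct. Note, however, that the paper does not actually prove Theorem \ref{sym}: it is quoted from Indong and Peralta \cite{Indong2008} without proof, so there is no internal argument to compare against step by step. Both of your routes are sound. The palindromicity of $\prod_{i=1}^{n}(1+q+\cdots+q^{i-1})$ (equivalently the complementation involution $\beta_i\mapsto n+1-\beta_i$, which is left multiplication by the longest element of $S_n$) gives the symmetry, and extracting the coefficient of $q^k$ from $f_n(q)=f_{n-1}(q)(1+q+\cdots+q^{n-1})$, with the vanishing convention $i(n-1,j)=0$ for $j<0$ or $j>\binom{n-1}{2}$ handling the truncation of the summation range, gives the recurrence with exactly the stated limits. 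It is worth observing that your arguments are precisely the type-$A$ shadows of what the paper does for the type-$B$ analogues: Theorem \ref{first} is proved by the bijection $w\mapsto w_0w$, which is the signed version of your complementation map, and Theorem \ref{second} is proved by partitioning $B_n$ according to the last value $\sigma(n)=j$, which is the (inverse-permutation) dual of your decomposition by the position of $n$. Your generating-function derivation of the recurrence from equation (\ref{MacMahon2}) is the slicker alternative, which the paper could equally have used for the type-$B$ recurrence via equation (\ref{MacMahon4}) but did not.
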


One can create Table \ref{tab:table}, which is known as \textit{the Mahonian triangle} and appears in Sloane \cite{Sloane} as A008302, with the help of the equation (\ref{MacMahon2}).

\begin{table}[h!] 
\caption{The Mahonian triangle}
	\label{tab:table}		
	\centering
	\scalebox{0.9}{
			\begin{tabular}{|c||c|c|c|c|c|c|c|c|c|c|c|c|c|c|c|c|c|}
					\hline
					$n \backslash k$ & $\mathcal{A}_n$&0 & 1 &2&3& 4& 5&6& 7&8&9&10&11&12&13&14&15\\
					\hline \hline
					1&0&1&&&&&&&&&&&&&&&\\
					\hline
					2&1&1&1&&&&&&&&&&&&&&\\
					\hline
					3&9&1&2&2&1&&&&&&&&&&&&\\
					\hline
					4&72&1&3&5&6&5&3&1&&&&&&&&&\\
					\hline
					5&600&1&4&9&15&20&22&20&15&9&4&1&&&&&\\
					\hline
					6&5400&1&5&14&29&49&71&90&101&101&90&71&49&29&14&5&1\\
				          \hline  \hline
					
			\end{tabular}}
\end{table}

Ghemit and Ahmia in \cite{Ghemit2023} introduced a $q$-analogue of the classical Mahonian numbers. Moreover, they provided lattice path and tilling interpretations of $q$-Mahonian numbers and then investigated some combinatorial properties of $q$-Mahonian numbers such as $q$-log-concavity in $k$ and $n$. 

Now let $S_0(x)=1+x^3+5x^4+22x^5+90x^6+359x^7+1415x^8+\cdots$ represent the generating function for the Mahonian numbers $i(n,n)$ and let $C(x)=\frac{1-\sqrt{1-4x}}{2x}$ denote the generating function for the Catalan numbers, $C_n=\frac{1}{n+1}\binom{2n}{n}$. Let $S_j(x)$ be the generating function for the Mahonian numbers $i(n,n-j)$ for a fixed non-negative integer $j$, namely 
\begin{equation*}
S_j(x)=\sum_{n \geq 0} i(n,n-j)x^n.
\end{equation*}
In \cite{claesson2023}, the authors proved that the generating function $S_j(x)$ for the Mahonian numbers $i(n,n-j)$ is precisely equal to 
\begin{equation}\label{gf}
(xC(x))^j S_0(x).
\end{equation}

\section{Preliminaries and Notation}

We start this section by recalling the definition of the group $B_n$, which is also known as a \textit{hyperoctahedral group}. Any element $w\in B_{n}$ acts as a signed permutation on the set $\langle n \rangle$ such that $w(-i)=-w(i)$ for each $i \in [n]$. The group $B_n$ has the canonical set of generators $S=\{t_1,s_1,\cdots, s_{n-1}\}$, and it has the following Dynkin diagram with respect to the set $S$:
\[
\begin{tikzpicture}
[
roundednode/.style={circle, draw=white!60, fill=white!5, very thick, minimum size=6mm},
roundnode/.style={circle, draw=black!50, fill=white!5, very thick, minimum size=6mm},
squarednode/.style={circle, draw=black!60, fill=white!5, very thick, text width=0.005mm, minimum size=6mm},
]
%Nodes
\node[roundnode, font=\tiny]      (maintopic){};
\node[roundednode]  [left=of maintopic]{$B_n$ :};
\node[roundnode]        (uppercircle)       [right=of maintopic]{};
\node[roundnode]      (rightsquare)       [right=of uppercircle]{};
\node[roundnode]        (lowercircle)       [right=of rightsquare]{};
\node[roundnode]        (zcircle)       [right=of lowercircle] {};
\node[node distance= 0.3cm, below = of maintopic]{$t_1$};
\node [label=$\frac{-1}{\sqrt{2}}$] (C) at (0.8,0) {};
\node[node distance= 0.3cm, below = of uppercircle]{$s_1$};
 \node[node distance= 0.3cm, below = of rightsquare]{$s_2$};
 \node[node distance= 0.3cm, below = of lowercircle]{$s_{n-2}$};
 \node[node distance= 0.3cm, below = of zcircle]{$s_{n-1}$};
%Lines
\draw [-,double  distance=2pt,thick](maintopic) -- (uppercircle);
\draw[-] (uppercircle) -- (rightsquare);
\draw[densely dashed,-] (rightsquare)-- (lowercircle);
\draw[-] (lowercircle)-- (zcircle);
\draw[-](lowercircle)-- (zcircle);
\end{tikzpicture}\\
\]

Let $t_{i+1}:=s_{i}t_{i}s_{i}$ for each $ i=1,\cdots, n-1$. The group $B_n$ is a semidirect product of $S_n$ and $\mathcal{T}_{n} $, written $B_n= S_n \rtimes \mathcal{T}_{n} $, where $S_n$ is the symmetric group generated by $\{ s_1, \cdots, s_{n-1}\}$ and $\mathcal{T}_{n}$ is a normal subgroup of $B_n$ generated by $\{ t_1, \cdots, t_n \}$. The cardinality of the group $B_{n}$ is clearly $2^{n} n!$. Note here that $s_i~~(i=1,\cdots,n-1)$ is a transposition identified with 
$$s_i(j)=
\begin{cases}
		i+1,  &  \textrm{if}~ j=i; \\
  	i,  &  \textrm{if}~ j = i+1;\\
		j,  &  \textrm{otherwise.}
\end{cases}
$$ and $t_i~~(i=1,\cdots,n)$ is a permutation defined by $$t_i(j)=
\begin{cases}
		j,  & \textrm{if} ~ j \neq i; \\
		-i,  & \textrm{if} ~ j=i.
\end{cases}
$$
The group $B_n$ has a nice representation given by the following form (the case $m=2$ in \cite{Davies1974}):
\begin{align*}
    B_n=&\langle  s_1,\cdots, s_{n-1}, t_1,\cdots, t_{n} : s_i^2=(s_i s_{i+1})^3=(s_i s_j)^2=e, |i-j|>1;\\ & t_{i}^2=(t_1s_1)^4=e, t_i t_j=t_j t_i, s_i t_i s_i=t_{i+1}, s_i t_j=t_j s_i,~j\neq i,i+1 \rangle
\end{align*}
where $e$ denotes the identity element of $B_n$. Any element $w \in B_n$ can be uniquely written in the form
\[
w = \bigl(\begin{smallmatrix}
	1 & 2 &  ~~\cdots  & n \\
	(-1)^{r_1}\beta_1 & ~~(-1)^{r_2}\beta_2 & ~~\cdots   & ~~ (-1)^{r_n}\beta_n
\end{smallmatrix}\bigr)=\beta \prod_{k=1}^n t_{k}^{r_k}\in B_n
\]
where $r_i \in \{0,1\}$ and we write $\beta = \bigl(\begin{smallmatrix}
	1 & 2 &  \cdots  & n \\
	\beta_1 & \beta_2 & \cdots   &  \beta_n
\end{smallmatrix}\bigr) \in S_n$.  For further information about the hyperoctahedral group $B_n$, see \cite{bjorner2005}.

Following \cite{flag2001} we let, $\gamma_0:=t_1$ and $\gamma_i:=s_is_{i-1}\cdots s_1 t_1 \in B_n$ for all $i=1, \cdots, n-1$. Thus, the collection $\{\gamma_0, \gamma_1, \cdots, \gamma_{n-1}\}$ is a different set of generators for $B_n$ and any $w\in B_n$ has a unique expression 
\begin{equation*}
w=\gamma_{n-1}^{k_{n-1}}\cdots \gamma_{2}^{k_{2}}\gamma_{1}^{k_{1}}\gamma_{0}^{k_{0}}
\end{equation*}
with $0\leq k_i \leq 2i+1$ for all $i=0,\cdots,n-1$. \textit{Flag-major index} was defined for the group $B_n$ as follows (see \cite{flag2001}): Let $w\in B_n$. Then
\begin{equation*}
fmaj(w)=\sum_{i=0}^{n-1}k_i.
\end{equation*}
It is well-known from \cite{flag2001} that the flag-major index  is Mahonian, that is, 
\begin{equation*}
\sum_{w \in B_n}q^{fmaj(w)}=\prod_{i=1}^n \frac{~1-q^{2i}}{1-q}.
\end{equation*}

\textit{The inversion table} of $w \in B_n$ is defined to be the sequence $Inv_B(w) = (inv_1(w) : \cdots : inv_n(w))$, where $inv_i(w)$ is called $i$-inversion of the permutation $w$ for each $i=1,\cdots, n$. We denote the sum of $i$-inversions of the permutation $w \in B_n$ by $inv_B(w)$, namely $inv_B(w):=\sum_{i=1}^n inv_i(w)$. Note that $inv_B$ agrees with the length function defined with respect to the generating set $S$ on $B_n$. The inversion table of $w$ can be easily constructed with the help of the following theorem:

\begin{thm}\label{3}
For $w=\beta \prod_{k=1}^n t_{k}^{r_k} \in B_n$, we have
\begin{equation}\label{33}
inv_i(w)=r_{n+1-i}+2.\mid \{(j,n+1-i) : j<n+1-i, ~~\beta_j<\beta_{n+1-i}, r_{n+1-i}\neq 0\} \mid+inv_i(\beta)
\end{equation}
for all $i=1,\cdots,n$, where $inv_i(\beta)=\mid\{(j,n+1-i) : j<n+1-i,~~ \beta_j>\beta_{n+1-i} \} \mid$ in $S_n$ and $r_{n+1-i} \in \{0,~1\}$.  More precisely, $inv_i(w)=1+2.\mid \{(j,n+1-i) : j<n+1-i, ~~\beta_j<\beta_{n+1-i}\} \mid+inv_i(\beta)$ when $r_{n+1-i}=1$ and   $inv_i(w)=inv_i(\beta)$ when $r_{n+1-i}=0$. 
\end{thm}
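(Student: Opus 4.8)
The plan is to compute directly from the one-line (window) notation. Fix $i$ and set $m:=n+1-i$, and write $w=[w_1,\dots,w_n]$ with $w_k=(-1)^{r_k}\beta_k$. Since $\beta\in S_n$ we have $\beta_k\in[n]$, so $|w_k|=\beta_k$ and $w_k<0$ precisely when $r_k=1$. Recall that $inv_B(w)=\ell(w)$ and that the $i$-inversion $inv_i(w)$ is the part of this length attached to the column $m$. By the standard length formula for the hyperoctahedral group (see \cite{bjorner2005}) one has $\ell(w)=\mathrm{inv}(w)+\mathrm{nsp}(w)+\mathrm{neg}(w)$, where $\mathrm{inv}(w)=|\{p<q:w_p>w_q\}|$, $\mathrm{nsp}(w)=|\{p<q:w_p+w_q<0\}|$ and $\mathrm{neg}(w)=|\{q:w_q<0\}|$. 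Grouping each of these three statistics by its larger index $q=m$ yields
\[
inv_i(w)=|\{p<m:w_p>w_m\}|+|\{p<m:w_p+w_m<0\}|+\varepsilon_m,
\]
where $\varepsilon_m=1$ if $r_m=1$ and $\varepsilon_m=0$ otherwise. Thus the whole task reduces to rewriting the two counts over $p<m$, which a priori involve all of the signs $r_p$, purely in terms of the unsigned comparisons between $\beta_p$ and $\beta_m$ together with the single bit $r_m$.

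I would then treat the two values of $r_m$ separately, in each case splitting the range $p<m$ according to $r_p\in\{0,1\}$ and substituting $w_p=(-1)^{r_p}\beta_p$ and $w_m=(-1)^{r_m}\beta_m$. When $r_m=0$ we have $w_m=\beta_m>0$ and $\varepsilon_m=0$; comparing signs, an index $p$ falls in the first count exactly when $r_p=0$ and $\beta_p>\beta_m$, and in the second count exactly when $r_p=1$ and $\beta_p>\beta_m$. Hence every $p<m$ contributes $1$ to the sum of the two counts iff $\beta_p>\beta_m$ and $0$ otherwise, independently of $r_p$, and summing gives $inv_i(w)=|\{p<m:\beta_p>\beta_m\}|=inv_i(\beta)$, as claimed.

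The case $r_m=1$ carries the actual content. Now $w_m=-\beta_m<0$ and $\varepsilon_m=1$. The same sign analysis shows that an index $p$ with $r_p=0$ lies in the first count always and in the second count iff $\beta_p<\beta_m$, while an index $p$ with $r_p=1$ lies in the second count always and in the first count iff $\beta_p<\beta_m$. In either case $p$ contributes exactly once ``for free'' and a second time precisely when $\beta_p<\beta_m$, and crucially this total is independent of $r_p$. This complementary cancellation of the $r_p$-dependence is the step I expect to be the main obstacle: one must see that whichever sign $p$ carries, the two membership conditions swap roles so that their combined count is the same. Summing over the $m-1$ indices $p<m$ gives $(m-1)+|\{p<m:\beta_p<\beta_m\}|$, and adding $\varepsilon_m=1$ produces, after writing $m-1=|\{p<m:\beta_p<\beta_m\}|+|\{p<m:\beta_p>\beta_m\}|$, the value $inv_i(w)=1+2\,|\{p<m:\beta_p<\beta_m\}|+|\{p<m:\beta_p>\beta_m\}|$, which is exactly the asserted expression when $r_{n+1-i}=1$.

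Finally I would record the consistency check that $0\le inv_i(w)\le 2m-1=2(n+1-i)-1$ in both cases, so that the entries $inv_1(w),\dots,inv_n(w)$ range over sets whose sizes multiply to $\prod_{m=1}^{n}2m=2^{n}n!=|B_n|$. This both confirms compatibility with $inv_B=\ell$ and shows that $Inv_B$ coordinatizes $B_n$, on which the later recurrence for $i_B(n,k)$ will rest.
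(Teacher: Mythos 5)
Your argument is correct, but it is a genuinely different route from the paper's: the paper offers no proof at all here, simply invoking Theorem 4.5 of the reference on generalized symmetric groups in the case $m=2$, whereas you give a self-contained derivation from the Bj\"orner--Brenti length formula $\ell(w)=\mathrm{inv}(w)+\mathrm{nsp}(w)+\mathrm{neg}(w)$, sliced by the larger index $q=m=n+1-i$. Your case analysis checks out in both branches: for $r_m=0$ the two counts partition $\{p<m:\beta_p>\beta_m\}$ according to $r_p$, and for $r_m=1$ each $p<m$ contributes $1+[\beta_p<\beta_m]$ regardless of $r_p$, which together with $m-1=|\{p<m:\beta_p<\beta_m\}|+|\{p<m:\beta_p>\beta_m\}|$ yields exactly the stated formula; I verified it also reproduces the paper's worked example $Inv_B(w)=(3:7:8:7:0:3:1:0)$. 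The one point you should make explicit is that you are \emph{taking as the definition} of $inv_i(w)$ the column-$(n+1-i)$ contribution to $\ell(w)$ under this particular grouping; the paper never defines $inv_i(w)$ internally (it defers to the cited reference), so your proof is really a verification that this natural grouping --- the direct analogue of the type-$A$ inversion table, consistent with $inv_B=\ell$ and with the stated ranges $inv_i(w)\in[0,2(n-i)+1]$ --- satisfies the claimed identity. What your approach buys is independence from the external reference and a transparent explanation of where the ``$+1$'' and the doubling come from ($\mathrm{neg}$ and the $\mathrm{inv}/\mathrm{nsp}$ pair, respectively); what it costs is only the need to reconcile your working definition with the one in the cited paper, which your closing cardinality check $\prod_{i=1}^{n}\bigl(2(n-i+1)\bigr)=2^{n}n!=|B_n|$ (matching Proposition \ref{p1}) makes routine.
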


Theorem \ref{3} is a special case of Theorem 4.5 in \cite{hasan2022} (the case $m=2$). We here note that $inv_i(w) \in [0,2(n-i)+1]$ for all $i=1,\cdots,n$.

 \begin{ex}
Let $w= \bigl(\begin{smallmatrix}
    1 & ~~2 & ~~~~ 3&~~ 4&~~~5&~~~6 &~~~7&~~8\\
   7&~~ 3 &~~ -2 &~~8&~~ -6&~~ -4&~~-1&~~5
\end{smallmatrix}\bigr)\in B_8.$ Considering the equation (\ref{33}) we obtain the inversion table of $w$ as $Inv_B(w)=(3:7:8:7:0:3:1:0)$, and so $inv_B(w)=29$.
 \end{ex}
If we specifically take $m=2$ in Proposition 4.12 of \cite{hasan2022}, we obtain the following result.
\begin{prop}\label{p1}
Let
\begin{align*}
\mathcal{T}_{2,n}&=\{(a_1 : \cdots : a_n) ~:~ 0 \leq a_i \leq 2(n-i+1)-1,~i=1,\cdots, n\}\\
&=[0,2n-1]\times [0,2n-3]\times \cdots \times [0,3]\times [0,1]. 
\end{align*}
The correspondence $w \mapsto Inv(w)$ gives a bijection between $B_n$ and $\mathcal{T}_{2,n}$.
\end{prop}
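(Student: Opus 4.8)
The plan is to verify that the correspondence is well-defined into $\mathcal{T}_{2,n}$, check that the two sets have equal cardinality, and then exhibit an explicit inverse, from which bijectivity follows. First I would record that, as already noted after Theorem~\ref{3}, $inv_i(w) \in [0, 2(n-i)+1]$ for every $w \in B_n$ and every $i$. Since $2(n-i)+1 = 2(n-i+1)-1$, the $i$-th coordinate of the inversion table $Inv_B(w)$ lies in exactly the range $[0, 2(n-i+1)-1]$ that defines the $i$-th factor of $\mathcal{T}_{2,n}$; hence $w \mapsto Inv_B(w)$ does map $B_n$ into $\mathcal{T}_{2,n}$. Next I would compute $|\mathcal{T}_{2,n}| = \prod_{i=1}^n 2(n-i+1) = 2^n n! = |B_n|$, so it suffices to prove the map is injective (equivalently, to construct a two-sided inverse).

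The core of the argument is to show that each coordinate $a_i := inv_i(w)$ simultaneously and unambiguously encodes the sign bit $r_{p}$ (with $p := n+1-i$) together with the relevant inversion data of $\beta$. Writing $L_p = |\{j < p : \beta_j < \beta_p\}|$ and $G_p = |\{j<p:\beta_j>\beta_p\}|$, so that $L_p+G_p = p-1$, Theorem~\ref{3} gives $a_i = G_p \in [0,p-1]$ when $r_p=0$, and $a_i = 1 + 2L_p + G_p = p + L_p \in [p, 2p-1]$ when $r_p=1$. Because these two subranges are disjoint and together exhaust $[0,2p-1] = [0,2(n-i)+1]$, the value of $a_i$ alone determines $r_p$ (namely $r_p=0$ iff $a_i \le p-1$) and then determines $G_p$ (by $G_p = a_i$ if $r_p=0$, and $G_p = p-1-L_p = 2p-1-a_i$ if $r_p=1$). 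Thus from the tuple $(a_1,\dots,a_n)$ I recover both the full sign vector $(r_1,\dots,r_n)$ and the sequence $(G_1,\dots,G_n)$ with $G_p \in [0,p-1]$.

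Finally I would invoke the classical fact that $\beta \mapsto (G_1,\dots,G_n)$ is a bijection between $S_n$ and $[0,0]\times[0,1]\times\cdots\times[0,n-1]$ (equivalently, the factorial-base/insertion code, where $G_p = p - \rho_p$ and $\rho_p$ is the rank of $\beta_p$ among $\beta_1,\dots,\beta_p$), which recovers $\beta$ uniquely. The unique factorization $w = \beta \prod_{k=1}^n t_k^{r_k}$ from Section~2 then recovers $w$ uniquely from $\beta$ and $(r_k)$. This produces a well-defined inverse to $w \mapsto Inv_B(w)$, completing the proof.

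The one step that requires care is the disjointness-and-exhaustion claim for the two subranges of $a_i$ according to $r_p$; this is where the precise formula of Theorem~\ref{3} and the identity $L_p+G_p=p-1$ are essential, and it is the crux that lets the single integer $a_i$ carry both a sign bit and a permutation-inversion count without ambiguity. Once this range splitting is established, the remainder is the standard symmetric-group inversion-table bijection combined with the equal-cardinality count, so I expect the decoding lemma to be the main content and the rest to be routine.
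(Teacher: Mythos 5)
Your argument is correct, and every step checks out against Theorem~\ref{3}: with $p=n+1-i$ and $L_p+G_p=p-1$, the two cases of the formula give $a_i=G_p\in[0,p-1]$ when $r_p=0$ and $a_i=1+2L_p+G_p=p+L_p\in[p,2p-1]$ when $r_p=1$, so the single coordinate $a_i$ does decode unambiguously into the sign bit $r_p$ and the value $G_p$, and the standard inversion-table bijection for $S_n$ plus the unique factorization $w=\beta\prod_k t_k^{r_k}$ finishes the job. The only point of comparison to make is that the paper does not prove this proposition at all: it is imported as the $m=2$ specialization of Proposition 4.12 of the cited work of Arslan, Altoum and Zaarour on generalized symmetric groups, so there is no in-text proof to match yours against. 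Your route --- well-definedness from the range $inv_i(w)\in[0,2(n-i)+1]$, the cardinality count $|\mathcal{T}_{2,n}|=2^n n!=|B_n|$, and the explicit decoding lemma --- is the natural direct proof and is self-contained, which is arguably preferable here; what the paper's citation buys instead is the general $m$-colored statement, of which this is one instance. One small stylistic remark: once you have the explicit two-sided inverse, the cardinality computation is not logically needed (and conversely, with the cardinality count you only need injectivity), so you could streamline by keeping just one of the two.
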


\section{A partition of the hyperoctahedral group}

Any two permutations $\sigma_1$ and $\sigma_2$ in $B_n$ are related, written $\sigma_1 \sim \sigma_2$,  if and only if $\sigma_1(n)=\sigma_2(n)$. It can be verified that $ \sim$ is an equivalence relation on $B_n$. The equivalence relation $\sim$ give rises to equivalence classes $C_j=\{ \sigma \in B_n~:~ \sigma (n)=j\}$ of $B_n$, where $j=-n,\cdots,-1,1,\cdots,n$. It is clear that $|C_j|=2^{n-1}(n-1)!$ for all  $j=-n,\cdots,-1,1,\cdots,n$. Thus we have a decomposition $B_n=\biguplus_{j=-n}^n{ C_j}$. Therefore, we have 
\begin{equation}\label{r1}
\mathcal{B}_n=\sum_{j=-n}^n\sum_{\sigma \in C_j}inv_B(\sigma)
\end{equation}
where $\mathcal{B}_n$ represents the total number of inversions of all permutations in $B_n$, namely $\mathcal{B}_n=\sum_{w \in B_n}inv_B(w)$. Let $\sigma=\bigl(\begin{smallmatrix}
	1 & ~~2 &  ~~\cdots &~~n-1 &~ n \\
	\sigma(1) & ~~\sigma(2) & ~~\cdots  & ~~\sigma(n-1) & ~~j
\end{smallmatrix}\bigr)  \in C_j$ and $a_1,\cdots,a_{n-1}$ be an arrangement of elements of $[n]\backslash \{|j|\}$  in increasing order. Now we associate $\sigma$ with the permutation $\tau$, which is defined by 
$$\tau:=\bigl(\begin{smallmatrix}
	a_1 & ~~a_2 &  ~~\cdots &~~a_{n-1}  \\
	\sigma(1 )& ~~\sigma(2) & ~~\cdots  & ~~\sigma(n-1)
\end{smallmatrix}\bigr). $$ 
Note that $\tau$ is a signed permutation of $[n]\backslash \{|j|\}$. We will denote by $\bold{P}([n]\backslash \{|j|\})$ the group of all the signed permutation of the set $[n]\backslash \{|j|\}$. In addition, if we define 
$$\sigma_{\tau,j}:=\bigl(\begin{smallmatrix}
	1 & ~~2 &  ~~\cdots &~~n-1 &~~ n \\
	\tau(a_1) & ~~\tau(a_2) & ~~\cdots  & ~~\tau(a_{n-1}) & ~~j
\end{smallmatrix}\bigr)$$
 then we see that  $\sigma=\sigma_{\tau,j}$. Hence, by Theorem \ref{3}, we conclude that
\begin{equation}\label{r2}
inv_B(\sigma)=
\begin{cases}
     n-j+ inv_B(\tau) & j > 0 \\
      n-j-1+ inv_B(\tau) & j < 0 \\
   \end{cases}.
\end{equation}
Based on the equations (\ref{r1}) and (\ref{r2}), we will give a recursive formula for $\mathcal{B}_n$ in the following proposition.

\begin{prop}\label{l1}
We have $\mathcal{B}_1=1$ and for all $n \geq 2$
$$\mathcal{B}_n=2^{n-1}n!(2n-1)+2n\mathcal{B}_{n-1}.$$
\end{prop}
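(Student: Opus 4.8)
The plan is to evaluate the double sum in equation (\ref{r1}) directly, using the block decomposition $B_n = \biguplus_{j=-n}^n C_j$ together with the inversion formula (\ref{r2}), and to reduce it to $\mathcal{B}_{n-1}$ via the correspondence $\sigma \mapsto \tau$. The base case is immediate: $B_1 = \{e, t_1\}$ with $inv_B(e) = 0$ and $inv_B(t_1) = 1$, so $\mathcal{B}_1 = 1$.

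First I would fix $j \in \{-n,\cdots,-1,1,\cdots,n\}$ and evaluate the inner sum $\sum_{\sigma \in C_j} inv_B(\sigma)$. The correspondence $\sigma \mapsto \tau$ described before equation (\ref{r2}) is a bijection from $C_j$ onto the signed permutation group $\bold{P}([n]\backslash\{|j|\})$ of the $(n-1)$-element set $[n]\backslash\{|j|\}$. The key observation is that $inv_B$ depends only on the relative order of the entries (via the comparisons $\beta_j \lessgtr \beta_{n+1-i}$ and the signs $r_k$ in Theorem \ref{3}), so it is unchanged under the unique order-preserving relabeling of $[n]\backslash\{|j|\}$ onto $[n-1]$; hence $\sum_{\tau} inv_B(\tau) = \mathcal{B}_{n-1}$ for every $j$. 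Combining this with (\ref{r2}) and $|C_j| = 2^{n-1}(n-1)!$ gives, for $j > 0$,
$$\sum_{\sigma \in C_j} inv_B(\sigma) = 2^{n-1}(n-1)!\,(n-j) + \mathcal{B}_{n-1},$$
and, for $j < 0$,
$$\sum_{\sigma \in C_j} inv_B(\sigma) = 2^{n-1}(n-1)!\,(n-j-1) + \mathcal{B}_{n-1}.$$

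Next I would sum over all $2n$ classes. The recursive terms contribute $\mathcal{B}_{n-1}$ once per class, producing the summand $2n\mathcal{B}_{n-1}$. For the positional terms, I would compute $\sum_{j=1}^n (n-j) = \tfrac{n(n-1)}{2}$ over the positive classes and, writing $j=-l$, the sum $\sum_{l=1}^n (n+l-1) = \tfrac{n(3n-1)}{2}$ over the negative classes. Adding and factoring yields $\tfrac{n(n-1)}{2} + \tfrac{n(3n-1)}{2} = n(2n-1)$, so the positional contribution equals $2^{n-1}(n-1)!\,n(2n-1) = 2^{n-1}n!\,(2n-1)$, which is precisely the first term of the claimed recurrence, giving $\mathcal{B}_n = 2^{n-1}n!\,(2n-1) + 2n\mathcal{B}_{n-1}$.

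Once the bijection is in place the remainder is pure bookkeeping, so the only genuinely conceptual point — and where I would be most careful — is the identity $\sum_{\tau} inv_B(\tau) = \mathcal{B}_{n-1}$, independent of $j$: namely that passing from a signed permutation of the arbitrary set $[n]\backslash\{|j|\}$ to one of $[n-1]$ preserves the total inversion count. It is exactly the order-invariance of $inv_B$ that makes the sum close up into the stated recurrence.
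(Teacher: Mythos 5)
Your proposal is correct and follows essentially the same route as the paper: the decomposition $B_n=\biguplus_{j}C_j$, the bijection $\sigma\mapsto\tau$ with equation (\ref{r2}), and the same bookkeeping yielding $2^{n-1}n!(2n-1)+2n\mathcal{B}_{n-1}$. The only (welcome) addition is that you make explicit the order-invariance of $inv_B$ justifying $\sum_{\tau}inv_B(\tau)=\mathcal{B}_{n-1}$, which the paper leaves implicit.
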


\begin{proof}
Since $B_1=\{e, \bigl(\begin{smallmatrix}
	~1\\
	-1 
\end{smallmatrix}\bigr)  \}$, then it is easy to see that $\mathcal{B}_1=1$. Now suppose $n \geq 2$.\\
\textbf{Case 1}. For each $j=1,\cdots,n$, we obtain
\begin{align*}
\sum_{\sigma \in C_j}inv_B(\sigma)&=\sum_{\tau \in \bold{P}([n]\backslash \{|j|\})}inv_B(\sigma_{\tau,j})\\
&=\sum_{\tau \in \bold{P}([n]\backslash \{|j|\})} \left( ( n-j)+ inv_B(\tau) \right)\\
&=2^{n-1}(n-1)!(n-j)+\sum_{\tau \in \bold{P}([n]\backslash \{|j|\})}  inv_B(\tau) \\
&=2^{n-1}(n-1)!(n-j)+\mathcal{B}_{n-1}.
\end{align*}
Then we get
\begin{align*}
\sum_{j=1}^n \sum_{\sigma \in C_j}inv_B(\sigma)&=\sum_{j=1}^n \left(2^{n-1}(n-1)!(n-j)+\mathcal{B}_{n-1}\right)\\
&=2^{n-2}n!(n-1)+n\mathcal{B}_{n-1}.
\end{align*}
\textbf{Case 2}. For each $j=-n,\cdots,-1$, we deduce
\begin{align*}
\sum_{\sigma \in C_j}inv_B(\sigma)&=\sum_{\tau \in \bold{P}([n]\backslash \{|j|\})}inv_B(\sigma_{\tau,j})\\
&=\sum_{\tau \in \bold{P}([n]\backslash \{|j|\})} \left( ( n-j-1)+ inv_B(\tau) \right)\\
&=2^{n-1}(n-1)!(n-j-1)+\sum_{\tau \in \bold{P}([n]\backslash \{|j|\})}  inv_B(\tau) \\
&=2^{n-1}(n-1)!(n-j-1)+\mathcal{B}_{n-1}.
\end{align*}
Then we obtain
\begin{align*}
\sum_{j=-n}^{-1} \sum_{\sigma \in C_j}inv_B(\sigma)&=\sum_{j=-n}^{-1} \left(2^{n-1}(n-1)!(n-j-1)+\mathcal{B}_{n-1}\right)\\
&=2^{n-2}n!(3n-1)+n\mathcal{B}_{n-1}.
\end{align*}
Considering Case 1 and Case 2 together, we conclude 

\begin{align*}
\mathcal{B}_n&=\sum_{j=-n}^n\sum_{\sigma \in C_j}inv_B(\sigma)\\
&=2^{n-2}n!(n-1)+n\mathcal{B}_{n-1}+2^{n-2}n!(3n-1)+n\mathcal{B}_{n-1}\\
&=2^{n-1}n!(2n-1)+2n\mathcal{B}_{n-1}
\end{align*}
as desired.

\end{proof}

\section{Mahonian numbers of type B}

For any two integers $n\geq 1$ and $k \geq 0$, let $I_B(n,k):=\{w \in B_n~:~inv_B(w)=k)\}$ and $ i_B(n,k)=|I_B(n,k)|$. In other words, $i_B(n,k)$ represents the number of signed permutations having $k$ inversions and in turn is called as \textit{Mahonian numbers of type B}. Hence, we can interpret these Mahonian numbers combinatorially as follows:\\

\textbf{Combinatorial interpretation:} $i_B(n,k)$ counts the number of ways to place "$k$" balls into "$n$" boxes such that the $j$th box contains at most "$2(n-j)+1$" balls.

The element of $B_n$ having maximum number of inversions is unique and it is expressed by the following form:
\[
w_0=  \left(
\begin{matrix}
	~~1 & ~~2 &~~ 3 & \cdots &~~ n-1 & ~~n \\
	-1 & -2 & -3 & \cdots &  -(n-1)  & -n
\end{matrix}
\right).
\]
It can be easily from Theorem \ref{3} seen that $inv_B(w_0)=n^2$. For this reason, we deduce that $i_B(n,k)=0$ when $k<0$ or $k>n^2$. By \cite{hasan2022}, we can write
\begin{equation}\label{MacMahon3}
\sum_{w \in B_n}q^{inv_B(w)}=\prod_{i=1}^n \frac{1-q^{2i}}{1-q}
\end{equation}
and so we say that the inversion statistic "$inv_B$" and the flag-major index "fmaj" are equi-distributed over $B_n$.  We can restate the equation (\ref{MacMahon3}) as
\begin{equation}\label{MacMahon4}
\sum_{k=0}^{n^2} i_B(n,k) q^k=(1+q)(1+q+q^2+q^3)\cdots(1+q+q^2+q^3+\cdots+q^{2n-1}).
\end{equation}
Note that $\mathcal{B}_n=\sum_{k=0}^{n^2}i_B(n,k)k$. Table \ref{tab:table3} can be established by using the equation (\ref {MacMahon4}) and we will call this table as \textit{the Mahonian triangle of type B}. This table also appears in Sloane \cite{Sloane} as A128084.

%\begin{landscape}
\begin{table}[h!] 
\caption{The Mahonian triangle of type B}
	\label{tab:table3}		
	\centering
	\scalebox{0.55}{
			\begin{tabular}{|c||c|c|c|c|c|c|c|c|c|c|c|c|c|c|c|c|c|c|c|c|c|c|c|c|c|c|c|}
					\hline
					$n \backslash k$ &$\mathcal{B}_n$& 0 & 1 &2&3& 4& 5&6& 7&8&9&10&11&12&13&14&15&16&17&18&19&20&21&22&23&24&25\\
					\hline \hline
					1&1&1&1&&&&&&&&&&&&&&&&&&&&&&&&\\
					\hline
					2&16&1&2&2&2&1&&&&&&&&&&&&&&&&&&&&&\\
					\hline
					3&216&1&3&5&7&8&8&7&5&3&1&&&&&&&&&&&&&&&&\\
					\hline
					4&3072&1&4&9&16&24&32&39&44&46&44&39&32&24&16&9&4&1&&&&&&&&&\\
					\hline
					5&48000&1&5&14&30&54&86&125&169&215&259&297&325&340&340&325&297&259&215&169&125&86&54&30&14&5&1\\
				          \hline  \hline
					
			\end{tabular}}
\end{table}
%\end{landscape}

Based on Proposition \ref{p1}, we can immediately obtain the following result.

\begin{cor} \label{c1}
For each $0 \leq k \leq n^{2}$, there exists an element $w$ of $B_n$ such that $inv_B(w)=k$. 
\end{cor}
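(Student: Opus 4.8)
The plan is to read Corollary~\ref{c1} as a surjectivity statement for the map $w\mapsto inv_B(w)$ onto the integer interval $[0,n^2]$, and to obtain it directly from the bijection of Proposition~\ref{p1}. Under that bijection the inversion table $Inv_B(w)=(a_1:\cdots:a_n)$ ranges over all of $\mathcal{T}_{2,n}$, i.e.\ over all tuples with $0\le a_i\le 2(n-i)+1$, and by definition $inv_B(w)=\sum_{i=1}^n a_i$. Thus the set of attained inversion numbers is exactly the set of coordinate-sums of points of $\mathcal{T}_{2,n}$, and it suffices to show that this set of sums is the full interval $\{0,1,\dots,n^2\}$.

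First I would pin down the endpoints: the all-zero tuple gives sum $0$ (the identity), while the maximal tuple $(2n-1:2n-3:\cdots:3:1)$ gives $\sum_{i=1}^n(2(n-i)+1)=1+3+\cdots+(2n-1)=n^2$, which corresponds to $w_0$ and recovers $inv_B(w_0)=n^2$. The core step is then to show there are no gaps between these endpoints. For this I would use a single-step increment argument: if $(a_1:\cdots:a_n)\in\mathcal{T}_{2,n}$ has sum $s<n^2$, then at least one coordinate $a_i$ satisfies $a_i<2(n-i)+1$, so replacing $a_i$ by $a_i+1$ produces another point of $\mathcal{T}_{2,n}$ whose sum is $s+1$. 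Starting from the all-zero tuple and iterating, every value in $\{0,1,\dots,n^2\}$ is realized as a coordinate-sum. Equivalently, one can exhibit a witness explicitly by the greedy assignment $a_i=\min\{2(n-i)+1,\ k-\sum_{j<i}a_j\}$, which places exactly $k$ balls whenever $k\le n^2$. Applying the inverse of the bijection of Proposition~\ref{p1} to such a tuple yields an element $w\in B_n$ with $inv_B(w)=k$, which is what is required; this is precisely the assertion that the ball-placement count in the combinatorial interpretation is positive throughout $0\le k\le n^2$.

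There is no substantial obstacle here, so the work is really in making the elementary combinatorial step rigorous rather than clever. The only points demanding care are (i) confirming that the increment step is always available, i.e.\ that a sum $s<n^2$ forces some coordinate to be strictly below its bound (immediate, since otherwise every coordinate is maximal and $s=n^2$), and (ii) checking that the greedy procedure never returns a negative remainder and terminates with total sum exactly $k$, which holds because the remaining number of balls stays nonnegative at each stage and the total capacity $n^2$ is at least $k$.
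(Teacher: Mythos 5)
Your proposal is correct and follows the same route as the paper: the paper offers no written proof, simply asserting that the corollary is ``immediate'' from the bijection of Proposition~\ref{p1}, and your argument supplies exactly the elementary details (endpoint tuples plus the increment/greedy step showing every coordinate-sum in $[0,n^2]$ is attained) that the paper leaves implicit.
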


\begin{thm}\label{first}
For each $0 \leq k \leq n^{2}$, we have $i_B(n,n^{2}-k)=i_B(n,k)$. 
\end{thm}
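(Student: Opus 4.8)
The plan is to read the symmetry directly off the product formula (\ref{MacMahon4}). Write $f(q)=\sum_{k=0}^{n^2} i_B(n,k)\,q^k=\prod_{i=1}^n g_i(q)$, where $g_i(q)=1+q+\cdots+q^{2i-1}$. The first step is to record that each factor $g_i$ is palindromic (self-reciprocal): since $\deg g_i=2i-1$, the substitution $q\mapsto 1/q$ together with clearing denominators gives $q^{2i-1}g_i(1/q)=g_i(q)$, because reversing the exponents $j\mapsto 2i-1-j$ merely permutes $\{0,1,\dots,2i-1\}$.

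Next I would multiply these reciprocal identities over $i=1,\dots,n$. Since $\sum_{i=1}^n(2i-1)=n^2$, this yields $q^{n^2}f(1/q)=\prod_{i=1}^n q^{2i-1}g_i(1/q)=\prod_{i=1}^n g_i(q)=f(q)$. Comparing the coefficient of $q^k$ on the two sides of the identity $f(q)=q^{n^2}f(1/q)$ gives $i_B(n,k)=i_B(n,n^2-k)$ for every $0\leq k\leq n^2$, which is exactly the assertion. Once the palindromicity of each factor is noted, this is essentially a one-line argument.

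An equivalent combinatorial proof, matching the backward-permutation language of the introduction, runs through the inversion-table bijection of Proposition \ref{p1}. Given $w\in B_n$ with inversion table $(a_1:\cdots:a_n)\in\mathcal{T}_{2,n}$, I would define the complemented table $(b_1:\cdots:b_n)$ by $b_i=(2(n-i+1)-1)-a_i$. Each $b_i$ again lies in $[0,2(n-i+1)-1]$, so by Proposition \ref{p1} it is the inversion table of a unique $w'\in B_n$, and $inv_B(w')=\sum_i b_i=n^2-\sum_i a_i=n^2-inv_B(w)$. The assignment $w\mapsto w'$ is then an involution carrying $I_B(n,k)$ bijectively onto $I_B(n,n^2-k)$, which gives the equality of cardinalities; one may also realize this map intrinsically as left multiplication by the longest element $w_0$, using the Coxeter-theoretic identity $inv_B(w_0w)=inv_B(w_0)-inv_B(w)=n^2-inv_B(w)$ valid because $inv_B$ is the length function and $inv_B(w_0)=n^2$.

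I expect no serious obstacle here: the generating function (\ref{MacMahon4}) is already established, so the only point requiring care in the first route is the bookkeeping of the reciprocal substitution $q\mapsto 1/q$ and confirming the total degree equals $n^2$. In the combinatorial route the single thing to verify is that complementation keeps each entry within its allowed range $[0,2(n-i+1)-1]$, which is immediate from the defining inequalities of $\mathcal{T}_{2,n}$.
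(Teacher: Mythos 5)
Your proposal is correct, and your primary routes differ from the paper's. The paper proves the symmetry by exhibiting the bijection $w\mapsto w_0w$ from $\{w: inv_B(w)=k\}$ to $\{w: inv_B(w)=n^2-k\}$, simply asserting that $inv_B(w_0w)=n^2-inv_B(w)$ ``clearly'' holds --- which is essentially the third, parenthetical variant at the end of your second route, except that you actually justify the key identity via $inv_B(w_0w)=\ell(w_0)-\ell(w)$ for the longest element. Your first route (each factor $1+q+\cdots+q^{2i-1}$ of (\ref{MacMahon4}) is self-reciprocal, $\sum_{i=1}^n(2i-1)=n^2$, so the product satisfies $q^{n^2}f(1/q)=f(q)$) is a clean, fully self-contained derivation from a formula the paper has already established, and your second route (complementing each entry of the inversion table within $[0,2(n-i+1)-1]$ and invoking the bijection of Proposition \ref{p1}) gives an explicit combinatorial involution without any appeal to Coxeter-theoretic properties of $w_0$. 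Either of your routes is arguably tighter than the paper's, since the paper's proof rests on the unproved assertion $inv_B(w_0w)=n^2-inv_B(w)$, whereas yours only use facts stated and referenced earlier in the text. No gaps.
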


\begin{proof}
Let $P_1=\{w\in B_n~:~inv_B(w)=k\}$ and $P_2=\{w\in B_n~:~inv_B(w)=n^2-k\}$. By Corollary \ref{c1}, it is clear that $P_1$ and $P_2$ are both nonempty. Thus the mapping $B~:~P_1 \rightarrow P_2,~B(w)=w_{0}w$ is clearly bijective since $inv_B(w_{0}w)=n^2-inv_B(w)$ for all $w \in B_n$. Therefore, we get $|P_1|=|P_2|$ and $i_B(n,n^{2}-k)=i_B(n,k)$.
\end{proof}
As a result of Theorem \ref{first}, we observe that Mahonian numbers of type $B$ are palindromic in $k$.

\begin{thm}\label{second}
For each $0 \leq k \leq n^{2}$, we have the following recurrence relation 
\begin{equation*}
i_B(1,0)=i_B(1,1)=1
\end{equation*}
and for all $n\geq 2$
\begin{equation}\label{rec}
i_B(n,k)=\sum_{i=max\{0,k-2n+1\}}^{min\{k,(n-1)^2\}}i_B(n-1,i).
\end{equation}
\end{thm}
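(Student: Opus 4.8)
The plan is to read the recurrence off directly from the product formula for the generating function in equation (\ref{MacMahon4}). Write $P_n(q):=\sum_{k=0}^{n^2} i_B(n,k)\,q^k = \prod_{i=1}^n\bigl(1+q+\cdots+q^{2i-1}\bigr)$. The base case $n=1$ is immediate, since $P_1(q)=1+q$ forces $i_B(1,0)=i_B(1,1)=1$. For the inductive step the key observation is that the product peels off its top factor cleanly, namely $P_n(q)=P_{n-1}(q)\cdot\bigl(1+q+\cdots+q^{2n-1}\bigr)$, so the whole statement reduces to identifying the coefficients of a convolution.

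First I would substitute the coefficient expansions into this factorization to obtain
\[
\sum_{k=0}^{n^2} i_B(n,k)\,q^k = \left(\sum_{j=0}^{(n-1)^2} i_B(n-1,j)\,q^j\right)\left(\sum_{m=0}^{2n-1} q^m\right).
\]
Comparing the coefficient of $q^k$ on the two sides yields
\[
i_B(n,k)=\sum_{\substack{j+m=k\\ 0\le j\le (n-1)^2,\ 0\le m\le 2n-1}} i_B(n-1,j),
\]
which is already the recurrence in disguise; it remains only to translate the two-variable constraint into a single range on $j$.

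The one place that needs care is the bookkeeping on the summation limits. For fixed $k$, the condition $m=k-j\in[0,2n-1]$ is equivalent to $k-2n+1\le j\le k$, and intersecting this with $0\le j\le (n-1)^2$ gives exactly the stated bounds $\max\{0,k-2n+1\}\le j\le\min\{k,(n-1)^2\}$; outside this range the coefficient $i_B(n-1,j)$ vanishes, so truncating loses nothing. I would check that the lower and upper limits never cross for $0\le k\le n^2$: since $0\le k$, $k-2n+1\le k$, and $0\le (n-1)^2$, the only nontrivial inequality is $k-2n+1\le (n-1)^2$, i.e.\ $k\le n^2$, which holds precisely because $n^2=(n-1)^2+(2n-1)$. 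I expect this index-chasing to be the only genuine obstacle, and it is a minor one—the arithmetic content of the theorem is entirely captured by the factorization of $P_n(q)$.

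As an alternative I would note that the same recurrence follows combinatorially from the ball-placement interpretation given above: conditioning on the number $m$ of balls in the first box (of capacity $2n-1$) leaves $k-m$ balls to be distributed among the remaining $n-1$ boxes, whose capacities $2n-3,2n-5,\dots,1$ are exactly those of the $B_{n-1}$ problem. This gives $i_B(n,k)=\sum_{m=0}^{2n-1} i_B(n-1,k-m)$, and reindexing by $i=k-m$ recovers equation (\ref{rec}) together with the same bounds.
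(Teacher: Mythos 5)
Your proposal is correct, but it takes a different route from the paper. You derive the recurrence algebraically from the factorization $P_n(q)=P_{n-1}(q)\,(1+q+\cdots+q^{2n-1})$ of the generating function in equation (\ref{MacMahon4}) and then do the coefficient bookkeeping, whereas the paper argues combinatorially: it partitions $I_B(n,k)$ according to the value $j=\sigma(n)$, uses the bijection $\sigma\leftrightarrow(\tau,j)$ with $\tau\in\bold{P}([n]\setminus\{|j|\})$ together with the inversion formula (\ref{r2}), and treats $j>0$ and $j<0$ as two cases whose index ranges $[\max\{0,k-n+1\},\min\{k,(n-1)^2\}]$ and $[\max\{0,k-2n+1\},\min\{k-n,(n-1)^2\}]$ are adjacent and union to the stated interval. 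Your generating-function argument is shorter and avoids that case analysis entirely, at the cost of leaning on the product formula (\ref{MacMahon4}), which the paper imports by citation rather than proving; the paper's argument is self-contained given the setup of Section 3 and makes visible \emph{which} signed permutations contribute each term $i_B(n-1,i)$. Your limit-checking (that $\max\{0,k-2n+1\}\le\min\{k,(n-1)^2\}$ for $0\le k\le n^2$, via $n^2=(n-1)^2+(2n-1)$) is correct and is a detail the paper glosses over. Note also that your ``alternative'' ball-placement argument is not really a second proof: conditioning on the occupancy of the first box (capacity $2n-1$) is the same as conditioning on $inv_1(w)$, which is exactly the paper's partition by $\sigma(n)$ in disguise.
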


\begin{proof}
It is clear that $i_B(1,0)=i_B(1,1)=1$. Now suppose $n\geq 2$,~ $\sigma \in I_B(n,k)$ and $\sigma \in \textbf{C}_j$.
%\textbf{Case 1.} If $j>0$, there exists a unique $\tau \in \textbf{P}([n]\setminus \{j\})$ such that $\sigma=\sigma_{\tau, j}$ and $inv_B(\sigma_{\tau, j})=n-j+inv_B(\tau)$. If we set $inv_B(\tau)=i$, then we have $0 \leq i \leq (n-1)^{2}$. We find those $i$ such that given $j$, $inv_B(\sigma_{\tau, j})=k$ and $i_B(n,k)$ can be formed by adding $i_B(n-1,i)$ for all values of $i$ that we found. We have $i=k+j-n \leq k$ and $i \leq (n-1)^2$. Thus we get $i \leq min\{k,(n-1)^2\}$. Since $i$ must be nonnegative and $i=k+j-n\geq k-2n+1 $, then we have $i \geq max\{0,k-2n+1\}$.\\
%\textbf{Case 2.} If $j<0$, we can find a unique $\tau \in \textbf{P}([n]\setminus \{j\})$ such that $\sigma=\sigma_{\tau, j}$ and $inv_B(\sigma_{\tau, j})=n-j-1+inv_B(\tau)$. If we take $inv_B(\tau)=i$, then we have $0 \leq i \leq (n-1)^{2}$. We find those $i$ such that given $j$, $inv_B(\sigma_{\tau, j})=k$ and $i_B(n,k)$ can be formed by adding $i_B(n-1,i)$ for all values of $i$ that we found. We have $i=k+j+1-n \leq k$ and $i \leq (n-1)^2$. Thus we get $i \leq min\{k,(n-1)^2\}$. Since $i$ must be nonnegative and $i=k+j+1-n\geq k-2n+1 $, then we have $i \geq max\{0,k-2n+1\}$.
%Thus we complete the proof.
Since $\sigma \in \textbf{C}_j$, then there exists a unique $\tau \in \textbf{P}([n]\setminus \{|j|\})$ such that $\sigma=\sigma_{\tau, j}$. Therefore, we have $inv_B(\sigma_{\tau, j})=\begin{cases} 
      n-j+inv_B(\tau) & j> 0 \\
     n-j-1+inv_B(\tau) & j< 0  
   \end{cases}$ by the equation (\ref{r2}). There always exists $i$ such that $inv_B(\tau)=i$ and $inv_B(\sigma_{\tau, j})=k$ for a given value $j$. Clearly, we have $0 \leq i \leq (n-1)^{2}$. As a result of this fact, $i_B(n,k)$ can be created by summing $i_B(n-1,i)$ for all values of $i$ that we found.\\
\textbf{Case 1.} Let $j>0$. Then we determine two upper bounds for $i$ as $i=k+j-n \leq k$ and $i \leq (n-1)^2$. Thus we obtain $i \leq min\{k,(n-1)^2\}$. Since $i$ must be non-negative and $i=k+j-n\geq k-n+1 $, then we conclude $i \geq max\{0,k-n+1\}$.\\
\textbf{Case 2.} If $j<0$, then we have $i=k+j+1-n \leq k-n$ and $i \leq (n-1)^2$. Thus we deduce $i \leq min\{k-n,(n-1)^2\}$. Since $i$ must be non-negative and $i=k+j+1-n\geq k-2n+1 $, then we have $i \geq max\{0,k-2n+1\}$.

In calculation process of $i_B(n,k)$, we need to find the number of all signed permutations with exactly $k$ inversions. Therefore, we will first  determine the contribution to the desired sum of the permutations such that each of them has $k$ inversions and such that $j>0$. From Case 1, these permutations contribute $i_B(n-1,i)$ to the sum, where $i$ satisfies $max\{0,k-n+1\} \leq i \leq min\{k, (n-1)^2\}$. On the other hand, as can be clearly seen from Case 2, the contribution of the permutations having $k$ inversions and $j<0$ to the sum is $i_B(n-1,i)$, where $i$ ranges over all non-negative integers included in the interval $[max\{0,k-2n+1\},~ min\{k-n, (n-1)^2\}]$. When these two observations are considered together, we deduce that 
$$i_B(n,k)=\sum_{i=max\{0,k-2n+1\}}^{min\{k,(n-1)^2\}}i_B(n-1,i).$$
Thus we complete the proof.
\end{proof}

\begin{ex}
We can illustrate the recurrence relation in the equation (\ref{rec}) as $i_B(4,7)=\sum_{i=0}^{7}i_B(3,i)=44$,~ $i_B(5,11)=\sum_{i=2}^{11}i_B(4,i)=325$ and $i_B(5,17)=\sum_{i=8}^{16}i_B(4,i)=215$ by using Table \ref{tab:table3}.
\end{ex}

Taking into account the equation (\ref{rec}), it is not hard to see that for all $n\geq 2$ 
$$ i_B(n,k)=\sum_{i=0}^{2n-1}i_B(n-1,k-i). $$

Given  a permutation $w=\bigl(\begin{smallmatrix}
	1 & ~~2 &  ~~\cdots &~~n-1 &~ n \\
	w_1 & ~~w_2 & ~~\cdots  & ~~w_{n-1} & ~~ w_n
\end{smallmatrix}\bigr) \in B_n$. The \textit{backward permutation} associated with the permutation $w \in B_n$, denoted by $\overleftarrow{w}$, is defined as
$$\overleftarrow{w}=\bigl(\begin{smallmatrix}
	1 & ~2 &  ~~\cdots  &~~ n-1&~~ n \\
	w_n & ~~w_{n-1} & ~~\cdots   & ~~ w_2& ~~ w_1
\end{smallmatrix}\bigr).$$
Essentially, the backward permutation of a given permutation in $B_n$ is obtained by reversing the elements in its row images. Clearly, any backward permutation  $\overleftarrow{w}$ is also an element of $B_n$.
\begin{thm}\label{t2}
For $n>1$, we have
\begin{equation*}
\mathcal{B}_n=\sum_{w \in B_n}inv_B(w)=2^{n-1}n^2n!.
\end{equation*}
\end{thm}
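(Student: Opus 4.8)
The plan is to prove the identity by a short induction on $n$ based on the recurrence already established in Proposition \ref{l1}. The base case is immediate, since the proposed formula $2^{n-1}n^2n!$ evaluates to $1$ at $n=1$, matching $\mathcal{B}_1=1$. For the inductive step I would assume $\mathcal{B}_{n-1}=2^{n-2}(n-1)^2(n-1)!$ and substitute it into $\mathcal{B}_n=2^{n-1}n!(2n-1)+2n\mathcal{B}_{n-1}$. Using $n\,(n-1)!=n!$ to factor out $2^{n-1}n!$, the bracketed factor becomes $(2n-1)+(n-1)^2=n^2$, which gives $\mathcal{B}_n=2^{n-1}n^2n!$ at once. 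This computation is purely algebraic, so there is essentially no obstacle along this route.

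A second, more self-contained derivation reads $\mathcal{B}_n$ directly off the Mahonian generating function: since $\mathcal{B}_n=\sum_{k}i_B(n,k)\,k$ is the value at $q=1$ of the derivative of the polynomial in (\ref{MacMahon4}), logarithmic differentiation of $\prod_{i=1}^n(1+q+\cdots+q^{2i-1})$ gives $\mathcal{B}_n=2^nn!\cdot\tfrac12\sum_{i=1}^n(2i-1)=2^{n-1}n^2n!$, using that each factor equals $2i$ at $q=1$ and has derivative $i(2i-1)$ there.

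I would also record the concrete enumeration promised in the abstract, which rests on the backward permutation $\overleftarrow{w}$. The key lemma is that reversing the one-line word complements the inversions among the signed entries while preserving the total negative contribution $\sum_{i:\,w_i<0}|w_i|$, so that for every $w\in B_n$
\[
inv_B(w)+inv_B(\overleftarrow{w})=\binom{n}{2}+2\sum_{i:\,w_i<0}|w_i|.
\]
Because $w\mapsto\overleftarrow{w}$ is an involution of $B_n$, summing over all $w$ turns the left side into $2\mathcal{B}_n$ and yields $2\mathcal{B}_n=\binom{n}{2}2^nn!+2\sum_{w\in B_n}\sum_{i:\,w_i<0}|w_i|$. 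The remaining double sum is evaluated by symmetry: at a fixed position each value of $\langle n\rangle$ occurs in exactly $2^{n-1}(n-1)!$ elements of $B_n$, so the negative values contribute $2^{n-1}(n-1)!\sum_{m=1}^n m$ there and $2^{n-1}n!\,\tfrac{n(n+1)}{2}$ over all $n$ positions. Substituting and factoring out $2^{n-1}n!$ leaves the bracket $n(n-1)+n(n+1)=2n^2$, again giving $\mathcal{B}_n=2^{n-1}n^2n!$.

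The only genuine difficulty is the displayed lemma in this last approach. To prove it I would first extract from Theorem \ref{3} that $inv_B$ splits as the number of inversions of the signed one-line word plus the sum of absolute values of its negative entries, and then verify that reversal sends each inversion of the signed word to a non-inversion and vice versa; the delicate cases are the mixed-sign pairs, where one checks directly that a positive entry preceding a negative one always inverts while the reverse ordering never does. Since the induction avoids all of this, I would present it as the main argument and offer the backward-permutation computation as the advertised explicit enumeration.
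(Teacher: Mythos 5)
Your proposal is correct, and your primary argument takes a genuinely different route from the paper. The paper proves Theorem \ref{t2} by exactly your third method: it applies the involution $w\mapsto\overleftarrow{w}$, invokes the identity $inv_B(w)+inv_B(\overleftarrow{w})=\binom{n}{2}+2\sum_{w_i<0}|w_i|$, and evaluates $\sum_{w\in B_n}\sum_{w_i<0}|w_i|=2^{n-1}n!\,\tfrac{n(n+1)}{2}$ by the same symmetry count you give; your arithmetic there reproduces the paper's line for line. What you treat as the main proof --- induction on the recurrence $\mathcal{B}_n=2^{n-1}n!(2n-1)+2n\mathcal{B}_{n-1}$ of Proposition \ref{l1} --- does not appear in the paper at all, even though that recurrence is proved there and the inductive step is a two-line computation; your version is arguably the more economical one, since it reuses an established result rather than introducing a new bijective identity. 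Your generating-function derivation from (\ref{MacMahon4}) is also valid and not in the paper. One point in your favor worth emphasizing: the paper asserts the identity $inv_B(w)+inv_B(\overleftarrow{w})=\binom{n}{2}+2\sum_{w_i<0}|w_i|$ with no justification, whereas you correctly isolate it as the only nontrivial step and sketch why it holds (namely, $inv_B(w)$ equals the number of inversions of the signed one-line word plus $\sum_{w_i<0}|w_i|$, reversal complements the former over all $\binom{n}{2}$ pairs of distinct entries and fixes the latter). Your remark about ``delicate mixed-sign pairs'' is unnecessary --- for any pair of distinct integers exactly one of the two orderings is an inversion, irrespective of signs --- but this does not affect correctness. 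The one thing to note is that the paper clearly intends the backward-permutation computation as the advertised ``concrete enumeration,'' so presenting it only as a secondary remark, as you do, changes the emphasis but loses nothing mathematically.
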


\begin{proof}
Let $f~:~B_n \rightarrow B_n$ be a mapping that assigns every permutation onto its backward permutation, namely $f(w)=\overleftarrow{w}$. It is clear that $f$ is an involution. Thus we have $B_n=\{\overleftarrow{w}~:~w\in B_n\}$. Since $inv_B(w)+inv_B(\overleftarrow{w})= \binom{n}{2} +2\sum_{w_i<0}|w_i|$, we get 
\begin{align*}
2\sum_{w \in B_n}inv_B(w)&=\sum_{w \in B_n}inv_B(w)+\sum_{w \in B_n}inv_B(\overleftarrow{w})\\
&=\sum_{w \in B_n} \left( \binom{n}{2} +2\sum_{w_i<0}|w_i|  \right)\\
&=\sum_{w \in B_n} \binom{n}{2} +2 \sum_{w \in B_n} \left(  \sum_{w_i<0}|w_i| \right)\\
&=2^{n}n!  \binom{n}{2}+2 \sum_{i=1}^n  (2^{n-1}n!)i \\
&=2^{n}n!  \binom{n}{2}+n 2^{n-1}(n+1)! \\
&=2^{n}n^2n!
\end{align*}
as desired.
\end{proof}

\textbf{Questions:} \\
\textit{\textbf{1.} How can $q$-analogue of Mahonian numbers of type $B$ be defined in the sense of \cite{Ghemit2023}? As a consequence, in what form do the $q$-analogues of Theorem \ref{first} and \ref{second} appear?}\\
\textit{\textbf{2.} How can the structure of the generating function of $i_B(n,n-j)$ for a fixed non-negative integer $j$ be derived as an analogue of the equation (\ref{gf})?}\\
\textit{\textbf{3.} The Knuth-Netto \cite{Knuth1973, netto1901} formula for the $k$th Mahonian number $i(n,k)$ when $k\leq n$ has the form
$$i(n,k)=\binom{n+k-1}{k}+ \sum_{j=1}^{\infty}(-1)^j \binom{n+k-u_j-j-1}{k-u_j-j}+ \sum_{j=1}^{\infty}(-1)^j \binom{n+k-u_j-1}{k-u_j}$$
where $u_j=\frac{j(3j-1)}{2}$ is the $j$th pentagonal number. Thus, what exactly is the analogue of the Knuth-Netto formula for $i_B(n,k)$ when $k\leq n$?}

\end{document}